\def\@cite#1#2{{\m@th\upshape\bfseries%
[{#1\if@tempswa{\m@th\upshape\mdseries, #2}\fi}]}}
\theoremstyle{plain}
\newtheorem{thm}{Theorem}[section]
\newtheorem{prop}[thm]{Proposition}
\newtheorem{lemma}[thm]{Lemma}
\theoremstyle{definition}
\newtheorem{defn}[thm]{Definition}
\newtheorem{ass}[thm]{Assumption}
\theoremstyle{}
\newtheorem{rem}[thm]{Remark}
\numberwithin{equation}{section}
\renewcommand{\bold}[1]{\medskip \noindent {\bf #1 }\nopagebreak}
\newcommand{\nc}{\newcommand}
\newcommand{\rnc}{\renewcommand}
\nc\bA{\mathbb{A}}
\nc\bB{\mathbb{B}}
\nc\bC{\mathbb{C}}
\nc\bD{\mathbb{D}}
\nc\bE{\mathbb{E}}
\nc\bF{\mathbb{F}}
\nc\bG{\mathbb{G}}
\nc\bH{\mathbb{H}}
\nc\bI{\mathbb{I}}
\nc{\bJ}{\mathbb{J}} 
\nc\bK{\mathbb{K}}
\nc\bL{\mathbb{L}}
\nc\bM{\mathbb{M}}
\nc\bN{\mathbb{N}}
\nc\bO{\mathbb{O}}
\nc\bP{\mathbb{P}}
\nc\bQ{\mathbb{Q}}
\nc\bR{\mathbb{R}}
\nc\bS{\mathbb{S}}
\nc\bT{\mathbb{T}}
\nc\bU{\mathbb{U}}
\nc\bV{\mathbb{V}}
\nc\bW{\mathbb{W}}
\nc\bY{\mathbb{Y}}
\nc\bX{\mathbb{X}}
\nc\bZ{\mathbb{Z}}
\nc\cA{\mathcal{A}}
\nc\cB{\mathcal{B}}
\nc\cC{\mathcal{C}}
\rnc\cD{\mathcal{D}}
\nc\cE{\mathcal{E}}
\nc\cF{\mathcal{F}}
\nc\cG{\mathcal{G}}
\rnc\cH{\mathcal{H}}
\nc\cI{\mathcal{I}}
\nc{\cJ}{\mathcal{J}} 
\nc\cK{\mathcal{K}}
\rnc\cL{\mathcal{L}}
\nc\cM{\mathcal{M}}
\nc\cN{\mathcal{N}}
\nc\cO{\mathcal{O}}
\nc\cP{\mathcal{P}}
\nc\cQ{\mathcal{Q}}
\rnc\cR{\mathcal{R}}
\nc\cS{\mathcal{S}}
\nc\cT{\mathcal{T}}
\nc\cU{\mathcal{U}}
\nc\cV{\mathcal{V}}
\nc\cW{\mathcal{W}}
\nc\cY{\mathcal{Y}}
\nc\cX{\mathcal{X}}
\nc\cZ{\mathcal{Z}}
\nc\wt{\widetilde}
\newcommand{\ra}{\longrightarrow}
\newcommand{\M}{\mathcal{M}}
\newcommand{\C}{\mathbb C}
\newcommand{\R}{\mathbb R}
\newcommand{\Q}{\mathbb Q}
\newcommand{\GL}{\mathrm{GL}}
\nc{\dmo}{\DeclareMathOperator}
\rnc{\Re}{\operatorname{Re}}
\rnc{\Im}{\operatorname{Im}}
\rnc{\span}{\operatorname{span}}
\dmo{\rank}{rank}
\dmo{\End}{End}
\dmo{\Hom}{Hom}
\dmo{\Jac}{Jac}
\dmo{\Id}{Id}
\dmo{\Ann}{Ann}
\dmo{\Area}{Area}
\dmo{\CP}{\bC P^1}
\dmo{\Aut}{Aut}
\title{Rank One Orbit Closures in $\mathcal{H}^{hyp}(g-1,g-1)$}
\author[Apisa]{Paul~Apisa}
\begin{document}
\maketitle

\begin{abstract}
All $\GL(2, \R)$ orbits in hyperelliptic components of strata of abelian differentials in genus greater than two are closed, dense, or contained in a locus of branched covers.
\end{abstract}

\section{Introduction}

Teichm\"uller geodesic flow induces a $\GL(2, \R)$ action on the moduli space of holomorphic one-forms $\Omega \M_g$. The space $\Omega \M_g$ admits a $\GL(2, \R)$-invariant stratification by specifying the number and order of zeros of the holomorphic one-forms. The components of these strata were classified by Kontsevich and Zorich~\cite{KZ} and shown to be distinguished by hyperellipticity and spin parity. The closure of a $\GL(2, \R)$ orbit is a linear submanifold by work of Eskin-Mirzakhani~\cite{EM} and Eskin-Mirzakhani-Mohammadi~\cite{EMM}. In Apisa~\cite{Apisa-hyp}, it was shown that in hyperelliptic components of strata of $\Omega \M_g$ orbit closures of dimension greater than three are either loci of branched covers or the entire component of the stratum. In the sequel, we strengthen this theorem to show the following:

\begin{thm}\label{T1}
In hyperelliptic components of strata of abelian differentials in genus greater than two every $\GL(2, \R)$ orbit is closed, dense, or contained in a locus of branched covers.
\end{thm}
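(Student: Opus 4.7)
By the main theorem of \cite{Apisa-hyp}, every orbit closure in $\hyp(g-1,g-1)$ of complex dimension greater than three is either the entire component or a locus of branched covers. An orbit whose closure is the whole component is dense, and an orbit whose closure is a branched cover locus is itself contained in such a locus, so both cases are already compatible with the desired trichotomy. We may therefore restrict attention to orbit closures $\cM$ of complex dimension at most three. If $\dim_{\bC}\cM = 2$, then $\cM$ equals its orbit (a Teichm\"uller curve) and the orbit falls into the ``closed'' case. The essential remaining case is a rank one orbit closure $\cM$ of complex dimension three, that is, rank one with one relative period, which I aim to identify as a branched cover locus.

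The plan is to produce a completely periodic representative $(X,\omega)\in \cM$ via Smillie-Weiss and study the resulting horizontal cylinder decomposition. Wright's cylinder deformation theorem partitions the cylinders into $\cM$-equivalence classes, and the simultaneous shears within each class stay in $T\cM$. Because $\cM$ has rank one, the core curves of the cylinders contributing to absolute cohomology span a single one-dimensional subspace of $H^1(X;\bC)$, and Wright's proportion theorem forces commensurability of their circumferences. The hyperelliptic involution $J$, which preserves $\cM$ and swaps the two zeros of $\omega$, permutes horizontal cylinders; pairing cylinders under $J$ sharply restricts the admissible configurations.

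Next I would interpret the one-dimensional rel direction of $T\cM$. Since $J$ exchanges the two zeros, it acts by $-1$ on $H^0(\Sigma)/H^0(X)$, so the rel direction is the $J$-antiinvariant deformation that pushes the two zeros apart. Combining this symmetry with the commensurability coming from the cylinder deformations yields a coherent quotient structure: the $J$-paired and $J$-fixed cylinder blocks descend to cylinders on a lower-genus translation surface $(Y,\eta)$, and $(X,\omega) \to (Y,\eta)$ is a branched cover pulling back the periods of $\eta$. Propagating this covering relation across $\cM$ --- using that the cylinder shears and the rel deformation together span $T\cM$ --- then identifies $\cM$ with a locus of branched covers.

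The main obstacle will be the final step: constructing the target $(Y,\eta)$ from the cylinder data at a single periodic representative and verifying that the resulting covering relation is uniform throughout $\cM$. In the rank $\geq 2$ setting of \cite{Apisa-hyp}, higher dimension leaves room for deformation-theoretic maneuvers, but here the tangent space is minimal and there is little slack. To rule out sporadic rank-one rel-one orbit closures that are not branched cover loci, one must delicately combine Wright's cylinder-proportion constraints, the $J$-antiinvariance of the rel direction, and the hypothesis $g > 2$ (which supplies enough Weierstrass points and enough cylinder flexibility on $X/J$) to force the required covering structure.
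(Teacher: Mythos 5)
Your reduction to the case of a rank one, rel one (complex dimension three), nonarithmetic orbit closure in $\cH^{hyp}(g-1,g-1)$ is correct and matches the paper's first step. But the remainder is a plan rather than a proof, and it contains a claim that is false in exactly the case that matters: you assert that rank one plus ``Wright's proportion theorem'' forces commensurability of the circumferences of the horizontal cylinders. By Wright's Theorem 1.9, all circumferences commensurable would make $\M$ arithmetic, i.e.\ a torus cover locus, and there would be nothing left to prove; in the nonarithmetic case the $\Q$-span of the circumference ratios is at least two-dimensional, and the entire difficulty of the theorem lives there. Likewise, the $J$-antiinvariance of the rel direction buys you little: on a hyperelliptic component $J^*$ acts by $-1$ on all of $H^1(X;\C)$, so antiinvariance does not single out the rel deformation or pair cylinders in a useful way. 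What actually pins down the rel direction is the combinatorics of the cylinder graph (the alternating-sign vector $\eta = i\sum (-1)^{d(C_j,C_1)}\gamma_j^*$ from the author's earlier work), not the involution.

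The genuine gap is the mechanism for organizing the horizontal cylinders into blocks that descend to a three-cylinder surface in $\cH(1,1)$. The paper does this by (i) flowing along $\eta$ until the first group of contracting cylinders collapses, continuing past the degeneration to a new periodic surface, and comparing the rational linear relations satisfied by the moduli before and after with the relations satisfied by the (unchanged) reciprocals of circumferences --- this eigenvalue-type argument yields exactly three ``sub-equivalence classes'' of equal height; (ii) a ``standard position'' argument producing vertical cylinders through a chosen horizontal saddle connection, which forces all horizontal saddle connections to have one of only two lengths, each length attached to one of the classes $\cA_1,\cA_2$; and (iii) an explicit construction of the quotient surface $(Y,\zeta)\in\cH(1,1)$ from the resulting tilings, after which McMullen's classification of orbit closures in genus two identifies the base as an eigenform locus. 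None of steps (i)--(iii) appears in your proposal, and you yourself flag the construction of $(Y,\eta)$ as an unresolved obstacle; as written, the argument does not go through.
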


Theorem~\ref{T1} and Eskin-Filip-Wright~\cite[Theorem 1.5]{EFW} imply that in these components there are only finitely many closed orbits that are not contained in a locus of branched covers. All invariant measures and orbit closures in hyperelliptic components are known up to computing this finite collection of Teichm\"uller curves. 

The proof uses the explicit geometry of abelian differentials in $\mathcal{H}^{hyp}(g-1,g-1)$ (developed in Lindsey~\cite{Lindsey} and extended in Apisa~\cite{Apisa-hyp}) and cylinder deformation results of Wright~\cite{Wcyl} to reduce the problem to the classification of orbit closures in genus two, which was achieved by McMullen in~\cite{Mc},~\cite{McM:spin},~\cite{Mc4}, and~\cite{Mc5}. 

\bold{Acknowledgements.} The author thanks Alex Eskin and Alex Wright for useful conversations and encouragement.  This material is based upon work supported by the National Science Foundation Graduate Research Fellowship Program under Grant No. DGE-1144082. The author gratefully acknowledges their support.

\section{Background}\label{S:Background}

Throughout this section we make the following assumption

\begin{ass}\label{A1}
Let $(X, \omega)$ be a horizontally periodic translation surface in an affine invariant submanifold $\M$. Suppose that $(C_i)_{i=1}^n$ is the collection of horizontal cylinders and that $C_i$ has height $h_i$, modulus $m_i$, and core curve oriented form left to right $\gamma_i$ of length $c_i$ for $i = 1, \hdots, n$. If a cylinder $C$ is specified without an index, then these quantities will be denoted $h_C$, $m_C$, $\gamma_C$, and $c_C$ respectively. Let $W \subseteq \Q^n$ be the collection of rational homogeneous linear relations satisfied by $m = (m_i)_{i=1}^n$, i.e. a vector $w$ belongs to $W$ if and only if $w \cdot m = 0$. 
\end{ass}

\begin{lemma}[Wright~\cite{Wcyl}, Corollary 3.4]\label{L:Wcyl2}
If $v \in \R^n$ belongs to $W^\perp$ then 
\[ \sum_{i=1}^n c_i v_i \gamma_i^* \in T_{(X, \omega)} \M \]
where $\gamma_i^*$ is the cohomology class dual to $\gamma_i$. 
\end{lemma}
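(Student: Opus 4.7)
The plan is to exhibit $\xi_v := \sum_{i=1}^n c_i v_i \gamma_i^*$ inside $T_{(X,\omega)}\M$ for every $v\in W^\perp$.

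I would begin with the case $v = m$, which is essentially a restatement of $\GL(2,\R)$-invariance of $\M$. The upper triangular unipotent horocycle flow preserves $\M$, so differentiating its orbit through $(X,\omega)$ at time zero produces a tangent vector lying in $T_{(X,\omega)}\M$. Computing periods against the horizontal core curves $\gamma_j$ and against saddle connections transverse to each cylinder identifies this tangent vector with $\sum_i h_i \gamma_i^* = \sum_i c_i m_i \gamma_i^* = \xi_m$. Hence $m$ belongs to the real subspace $V := \{v \in \R^n : \xi_v \in T_{(X,\omega)}\M\}$.

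To upgrade $\R \cdot m \subseteq V$ to $W^\perp \subseteq V$, the key observation is that $W^\perp$ is, by definition as the $\R$-orthogonal complement of a $\Q$-subspace, the smallest $\Q$-rational subspace of $\R^n$ containing $m$. It therefore suffices to show that $V$ is itself cut out by $\Q$-linear equations. For this I would invoke the theorem of Avila-Eskin-M\"oller (sharpened by Wright) that $T_{(X,\omega)}\M$ is cut out in $H^1(X,\Sigma;\R)$ by linear equations with coefficients in the field of definition $\bk(\M) \subseteq \R$. Restricting these equations to the $\Q$-rational finite-dimensional subspace $\span\{\gamma_i^*\}$ produces a presentation of $V$ by $\bk(\M)$-linear equations in the variables $(c_i v_i)$.

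The main obstacle will be promoting the $\bk(\M)$-rationality of these equations to $\Q$-rationality. My preferred route is to exploit the $\GL(2,\R)$-invariance of $\M$: the same presentation must hold at every translate $g\cdot(X,\omega)$, and applying the Teichm\"uller geodesic flow rescales all the circumferences $c_i$ by a common factor while fixing moduli, which should rigidify the coefficients to lie in $\Q$. Should this approach falter, the lemma can instead be deduced from Wright's full Cylinder Deformation Theorem~\cite{Wcyl}, which partitions the horizontal cylinders into $\M$-equivalence classes $E$ and provides a standard twist $\sum_{i\in E}c_i m_i\gamma_i^*$ in $T_{(X,\omega)}\M$ for each class; every $v\in W^\perp$ then decomposes as a linear combination of the characteristic vectors of these classes scaled coordinatewise by $m$, so $\xi_v \in T_{(X,\omega)}\M$ by linearity.
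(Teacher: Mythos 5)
First, a point of comparison: the paper offers no proof of this lemma at all --- it is imported verbatim as Corollary 3.4 of Wright's cylinder deformation paper --- so you are attempting to reprove a black-box citation. Your skeleton is the right one and does match the shape of Wright's argument: show that $V:=\{v\in\R^n:\ \sum_i c_iv_i\gamma_i^*\in T_{(X,\omega)}\M\}$ contains $m$ (your horocycle computation $\tfrac{d}{ds}\big|_{s=0}u_s\cdot\omega=\Im(\omega)=\sum_i h_i\gamma_i^*=\sum_i c_im_i\gamma_i^*$ is correct), and observe that $W^\perp$ is the smallest $\Q$-rational subspace of $\R^n$ containing $m$, so that it suffices to prove $V$ is defined over $\Q$. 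The gap is that neither of your two mechanisms actually delivers that rationality.

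The geodesic-flow trick does nothing: $g_t=\mathrm{diag}(e^t,e^{-t})$ multiplies every circumference by $e^t$ (and every modulus by $e^{-2t}$, so the moduli are not ``fixed''), hence it multiplies each coefficient $b_jc_j$ in your putative presentation of $V$ by the same scalar and leaves the presentation projectively unchanged; $\bk(\M)$-rationality of $T(\M)$ in period coordinates does not descend to $\Q$-rationality of $V$ after the non-algebraic coordinate change $a_i=c_iv_i$ without a further arithmetic input, and that input is precisely the content of Wright's proof. The fallback is worse, because it rests on a false claim: the standard twists $\sum_{i\in E}c_im_i\gamma_i^*$ span a subspace of $V$ of dimension at most the number $k$ of equivalence classes, whereas $\dim_\R W^\perp=\dim_\Q\span_\Q\{m_1,\dots,m_n\}$, which can strictly exceed $k$. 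This is exactly the regime of the present paper: for a rank one, rel one, nonarithmetic $\M$ all horizontal cylinders form a single equivalence class, so the Cylinder Deformation Theorem alone yields only the line $\R m\subseteq V$, while Lemma~\ref{L:facts2} needs the full two-dimensional $W^\perp$; its second generator $(q_i/c_i)_i$ corresponds to the rel deformation $\eta$, which is not a combination of standard twists. So the hard arithmetic content of the lemma is not established by either route, and as written the proposal proves only the strictly weaker statement that $m\in V$.
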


\begin{defn}
The twist space of a horizontally periodic translation $(X, \omega)$ in an affine invariant submanifold $\M$ is the complex linear span of all tangent vectors of $T_{(X, \omega)} \M$ of the form $\sum_{i=1}^n a_i \gamma_i^*$. The cylinder preserving space is the subspace of $T_{(X, \omega)} \M$ consisting of cohomology classes that evaluate to zero on the core curves of all horizontal cylinders. An element of this space is called a cylinder preserving deformation. 
\end{defn}

The tangent space to $\M$ at $(X, \omega)$ is a subspace of the relative cohomology group $H^1(X, \Sigma; \C)$ where $\Sigma$ is the collection of zeros of $\omega$. Let $p: H^1(X, \Sigma; \C) \ra H^1(X; \C)$ be the projection from relative to absolute cohomology. The image of $T_{(X, \omega)} \M$ under $p$ is always complex-symplectic by Avila-Eskin-M\"oller~\cite{AEM}.

\begin{lemma}[Wright~\cite{Wcyl}, proof of Theorem 1.10]\label{L:Wcyl1} 
For any affine invariant submanifold $\M$ there is a horizontally periodic translation surface $(X, \omega) \in \M$ whose twist space and cylinder preserving space coincide. On such a surface, the twist space contains all of $\ker p \cap T_{(X, \omega)} \M $ and projects to a Lagrangian subspace of $p\left(T_{(X, \omega)} \M \right)$.
\end{lemma}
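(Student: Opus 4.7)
The plan is an extremal argument on horizontally periodic surfaces in $\M$ combined with the cylinder preservation theorem. To begin, observe that the twist space is automatically contained in the cylinder preserving space: every $\gamma_i^*$ pairs with $\gamma_j$ via the intersection number $\langle\gamma_j,\gamma_i\rangle$, which vanishes because horizontal core curves are pairwise disjoint.

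Among horizontally periodic $(X,\omega) \in \M$ (which exist by Smillie-Weiss), I would select one maximizing the dimension of the twist space. Suppose, toward a contradiction, that the twist space is strictly smaller than the cylinder preserving space at this surface, and pick $v$ in the cylinder preserving space but not the twist space. Because $v(\gamma_i)=0$ for each $i$, flowing $(X,\omega)$ in the direction of $v$ within $\M$ preserves the horizontal cylinder decomposition for small time, while changing the heights $h_i(t)$, and hence the moduli $m_i(t)=h_i(t)/c_i$. The key step is to show that along this flow some $\Q$-linear relation in $W$ is strictly broken: if not, the vector $(v(\delta_i)/c_i)$ of first-order modulus changes would lie in $W^\perp$, so by Lemma~\ref{L:Wcyl2} there would be a twist $\tau = \sum_{i=1}^n v(\delta_i)\gamma_i^* \in T\M$ reproducing the modulus effect of $v$. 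The residual $v - \tau$ would then kill every $\gamma_i$ and every $\delta_i$; combined with the extremal choice of surface, this forces $v - \tau$ to lie in the twist space, contradicting $v \notin$ twist space. Therefore $\dim W$ strictly drops along the deformation, and Lemma~\ref{L:Wcyl2} makes the twist space at the nearby surface strictly larger than at $(X,\omega)$, contradicting maximality.

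Given this coincidence at some $(X,\omega)$, the second assertion is immediate: any element of $\ker p \cap T_{(X,\omega)}\M$ kills every absolute cycle, including each $\gamma_i$, is therefore cylinder preserving, and hence is a twist. For the Lagrangian statement, set $V := p(T_{(X,\omega)}\M)$ and $L := V \cap \span\{\gamma_i^*\}$. The projection $p$ sends the twist space into $L$, and $L$ is isotropic in the restricted symplectic form since $\langle\gamma_i^*,\gamma_j^*\rangle = \langle\gamma_i,\gamma_j\rangle = 0$. Using the coincidence of twist and cylinder preserving spaces together with the exact sequence $0 \to \ker p \cap T_{(X,\omega)}\M \to T_{(X,\omega)}\M \to V \to 0$, one identifies $p(\text{twist space})$ with $L$ and then verifies $L = L^{\perp_V}$ by a dimension count, yielding Lagrangian.

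The main obstacle is the analysis of the residual $v - \tau$ in the contradiction argument: showing that at a surface with maximal twist-space dimension, a cylinder preserving deformation whose modulus effect is reproducible by a twist must itself be a twist. This is the step where the extremality of $(X,\omega)$ really does work, in tandem with the precise characterization of twists provided by Lemma~\ref{L:Wcyl2}.
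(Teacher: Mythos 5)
This lemma is not proved in the paper at all --- it is quoted verbatim from Wright's cylinder deformation paper (Lemma 8.6 and the proof of Theorem 1.10 there), so the benchmark is Wright's argument. Your overall architecture matches his: maximize the twist-space dimension over horizontally periodic surfaces in $\M$, and derive a contradiction from a cylinder-preserving deformation $v$ not in the twist space. Your treatment of the second half of the statement is also essentially right (rel classes kill all absolute cycles, hence are cylinder preserving, hence twists; isotropy of the projected twist space follows from disjointness of core curves), though the ``dimension count'' for maximal isotropy is exactly where the hypothesis that the twist space equals the cylinder preserving space must be used, and you do not carry it out.

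The contradiction step, however, has two genuine gaps. First, the claim that flowing along $v$ ``preserves the horizontal cylinder decomposition for small time'' is false as stated: since $v(\gamma_i)=0$ the $n$ cylinders persist with horizontal core curves of fixed length, but $v$ need not vanish on the horizontal saddle connections forming the cylinder boundaries, so those saddle connections tilt and the deformed surface need not be horizontally periodic at all --- the persisting cylinders no longer cover it. Handling this degeneration (and producing from it a horizontally periodic surface with strictly larger twist space) is precisely the content of Wright's Lemma 8.6; your argument assumes it away, and your comparison of $W$ and $W^{\perp}$ at the two surfaces moreover requires the nearby surface to have the same cylinders with the same core-curve lengths, which is exactly what is in doubt. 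Second, the residual analysis cannot work as formulated: $\{\gamma_i\}\cup\{\delta_i\}$ do not generate $H_1(X,\Sigma;\Z)$ (the horizontal boundary saddle connections are missing), so a class vanishing on all $\gamma_i$ and $\delta_i$ need not be a twist; $v-\tau$ does not even kill the $\delta_i$ but only (after normalizing $v$ to be purely imaginary) the imaginary parts of their periods; and the appeal to extremality to force $v-\tau$ into the twist space is circular, since that is the statement being proved. A concrete failure mode: a real cylinder-preserving $v$ changes no modulus at all, so your ``some relation in $W$ breaks'' dichotomy yields nothing. Wright's actual route is to reduce to $v=iu$ purely imaginary, observe that $u$ must be nonzero on some boundary saddle connection (otherwise $u=\sum u(\delta_i)\gamma_i^*$ would already be a twist, since the boundary saddle connections together with the cross curves generate relative homology), and then analyze the resulting breakup of the horizontal decomposition --- this is the missing idea.
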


\begin{defn}
The rank of an affine invariant submanifold is defined to be half the complex dimension of $p \left( T_{(X, \omega)} \M \right)$ for any $(X, \omega) \in \M$. This definition is independent of the choice of $(X, \omega)$. The rel is defined to be the complex dimension of  the intersection of $\ker p$ and $T_{(X, \omega)} \M$. An element of this intersection is called a rel deformation. 
\end{defn}. 

\section{Sub-equivalence Classes in $\mathcal{H}^{hyp}(g-1,g-1)$}

Throughout this section Assumption~\ref{A1} will remain in effect. Moreover, we will make the following assumption.

\begin{ass}\label{A2}
$\M$ is a rank one rel one nonarithmetic orbit closure in $\mathcal{H}^{hyp}(g-1, g-1)$ for some $g > 2$. 
\end{ass}

\begin{defn}\label{D:eta}
By Apisa~\cite[Theorem 6.3]{Apisa-hyp}, if $(X, \omega)$ is a horizontally periodic translation surface whose twist space contains a rel deformation, then the rel deformation is given (up to scaling) by 
\[ \eta := i \cdot \sum_{j=1}^n q_j \gamma_j^* \qquad \text{where} \qquad q_j = (-1)^{d(C_j, C_1)}\]
where $d(C_j, C_1)$ is the distance between $C_j$ and $C_1$ in the cylinder graph - the graph whose vertices are horizontal cylinders and edge relations correspond to cylinder adjacencies. Notice that $\eta$ is characterized by the property that one of the coefficients (and hence all coefficients) of $\{\gamma_j^* \}_{j=1}^n$ belongs to $\{ \pm i \}$. Hence $\eta$ is defined up to multiplication by $\pm 1$. Since $\eta$ is a tangent vector on a linear submanifold, we may move in the $\eta$ direction, and this alters the heights of the horizontal cylinders. When a cylinder reaches height zero, it is said to collapse. 
\end{defn}

\begin{lemma}\label{L:facts}
There is a horizontally periodic translation surface $(X, \omega)$ in $\M$ that contains $\eta$ in its twist space. By moving an arbitrarily small distance in the $\eta$ direction it is possible to ensure that the modulus of any horizontal cylinder that expands under $\eta$ is not a rational multiple of the modulus of any horizontal cylinder that contracts under $\eta$.
\end{lemma}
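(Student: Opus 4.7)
The plan has two parts, mirroring the two clauses of the statement. For the first, I would invoke Lemma~\ref{L:Wcyl1} to obtain a horizontally periodic $(X, \omega) \in \M$ whose twist space equals its cylinder preserving space and contains all of $\ker p \cap T_{(X,\omega)}\M$. By Assumption~\ref{A2}, $\M$ has rel one, so this intersection is a one-dimensional complex subspace; in particular it contains a nonzero rel deformation lying in the twist space. By Definition~\ref{D:eta} (that is, Apisa~\cite[Theorem 6.3]{Apisa-hyp}), any rel deformation that lies in the twist space of a horizontally periodic surface must be a scalar multiple of $\eta$. Hence $\eta$ belongs to the twist space.

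For the second assertion, I would perturb $(X,\omega)$ along $\eta$. Since $\eta$ is tangent to the linear submanifold $\M$ and since small twist perturbations preserve horizontal periodicity and the combinatorics of the cylinder decomposition, the family $(X_t,\omega_t) = (X,\omega) + t\eta$ stays in $\M$ with the same horizontal cylinders for $t$ sufficiently small. The core curves $\gamma_i$ are pairwise disjoint horizontal loops, so $\gamma_j^*(\gamma_k) = \gamma_j \cdot \gamma_k = 0$, and the circumferences $c_k$ are constant in $t$. Pairing $\eta$ against a vertical transversal of $C_k$ shows that the height evolves as $h_k(t) = h_k + t q_k$ (after a harmless rescaling of $\eta$), so the modulus is $m_k(t) = (h_k + t q_k)/c_k$.

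For each pair $(C_a, C_b)$ with $C_a$ expanding and $C_b$ contracting, and each nonzero rational $r$, the equation $m_a(t) = r \cdot m_b(t)$ is affine linear in $t$ and thus has at most one solution. Since there are finitely many such cylinder pairs and countably many rationals, the set of "bad" values of $t$ is countable. Its complement is therefore dense in every neighborhood of $0$, so I can choose an arbitrarily small $t$ at which no expanding-cylinder modulus is a rational multiple of any contracting-cylinder modulus. The perturbed surface $(X_t,\omega_t)$ still contains $\eta$ in its twist space, since the cylinder graph is unchanged by the small perturbation and the formula defining $\eta$ depends only on that graph.

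I do not expect a serious obstacle here: the first part is essentially bookkeeping combining the rel-one hypothesis, Lemma~\ref{L:Wcyl1}, and the characterization of $\eta$, while the second part is a standard countable-avoidance argument once one has the linear evolution of heights and the constancy of circumferences. The only care needed is to verify that moving by a small rel deformation genuinely preserves horizontal periodicity and the cylinder combinatorics, which is a local consequence of the cylinders being open subsets.
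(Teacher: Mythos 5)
Your proposal is correct and follows essentially the same route as the paper: Lemma~\ref{L:Wcyl1} combined with the rel-one hypothesis and the characterization of $\eta$ gives the first claim, and a countable-avoidance argument along the affine-in-$t$ evolution of the moduli gives the second. The paper's proof is just a terser version of the same argument, leaving implicit the details (linearity of heights, constancy of circumferences, persistence of the cylinder decomposition) that you spell out.
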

\begin{proof}
By Lemma~\ref{L:Wcyl1}, there is a translation surface $(X, \omega)$ whose twist space contains the rel deformation $\eta$. Moving in the $\eta$ direction causes some horizontal cylinders to expand and others to contract. The collection of times in which one contracting and one expanding horizontal cylinder have a rational ratio of moduli is countable. Therefore, moving in the $\eta$ direction for an arbitrarily small amount of time that lies outside of this countable set produces the desired surface.
\end{proof}

\begin{lemma}\label{L:facts2}
If $(X, \omega)$ contains $\eta$ in its twist space and not all its horizontal cylinders have a rational ratio of moduli, then 
\[ W^\perp = \span_\C \left\{ m, (q_i/c_i)_{i=1}^n \right\} \]
and the vector space of rational homogeneous linear relations satisfied by $(1/c_i)_{i=1}^n$ is exactly $W \cdot \mathrm{diag}(q_i)$.
\end{lemma}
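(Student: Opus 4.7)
The plan is to deduce both conclusions from the fact that, on the surface $(X,\omega)$ supplied by Lemma~\ref{L:facts}, the twist space has complex dimension two and coincides with the image of $W^\perp_\C$ under the cylinder deformation embedding $v \mapsto \sum_{i=1}^n c_i v_i \gamma_i^*$ of Lemma~\ref{L:Wcyl2}. Since $\M$ has rank one and rel one, $p(T_{(X,\omega)}\M)$ is complex-symplectic of complex dimension two, so its Lagrangian subspaces have complex dimension one; Lemma~\ref{L:Wcyl1} then gives that the twist space has complex dimension $1+1=2$ (rel contribution plus Lagrangian projection). The embedding shows $\dim_\C W^\perp \le 2$, while the hypothesis that not all horizontal cylinders have rational ratio of moduli yields $\dim_\Q \span_\Q\{m_i\} \ge 2$, hence $\dim_\Q W \le n-2$ and $\dim_\C W^\perp \ge 2$. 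Thus $\dim_\C W^\perp = 2$ and the embedding is surjective onto the twist space.

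Next I would identify two spanning vectors. Trivially $m \in W^\perp$. Because $\eta$ lies in the twist space, there is $v \in W^\perp_\C$ with $\sum c_i v_i \gamma_i^* = \eta = i\sum q_j \gamma_j^*$; matching coefficients of $\gamma_j^*$ gives $v_j = iq_j/c_j$, so $(q_j/c_j) \in W^\perp_\C$. These two vectors are complex-linearly independent: any relation $\lambda m + \mu (q/c) = 0$ forces $\lambda h_i + \mu q_i = 0$ for every $i$, and since the $h_i$ are positive this would make all $q_i$ share the sign of $q_1 = 1$. But the cylinder graph of $(X,\omega)$ is connected (because $(X,\omega)$ is connected) and has at least one edge (because $\mathcal{H}^{hyp}(g-1,g-1)$ with $g>2$ precludes $n = 1$), so some adjacent pair of cylinders has $q$-values of opposite sign, a contradiction. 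Hence $W^\perp = \span_\C\{m,(q_i/c_i)\}$.

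For the second statement, let $\phi\colon \Q^n \to \Q^n$ be the involution $\phi(w)_i = w_i q_i$, so that $W \cdot \mathrm{diag}(q_i) = \phi(W)$, and let $W'$ denote the space of rational homogeneous linear relations on $(1/c_i)$. The inclusion $\phi(W) \subseteq W'$ is immediate: for $w \in W$, $\phi(w)\cdot(1/c) = w\cdot(q/c) = 0$ by the first claim. The reverse inclusion reduces, by a dimension count, to proving $\dim_\Q \span_\Q\{1/c_i\} = 2$, which together with $\dim \phi(W) = \dim W = n-2$ yields $\dim W' = n-2 = \dim\phi(W)$ and hence equality. This last dimension estimate is the main obstacle. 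To establish it I would combine the relation $\sum q_j c_j = 0$ (which follows by integrating $\omega$ against $\sum q_j \gamma_j = 0$ in $H_1(X;\C)$, the content of Apisa~\cite[Theorem 6.3]{Apisa-hyp} that makes $\eta$ a rel class) with the non-rational moduli hypothesis and the rank-one, rel-one, nonarithmetic character of $\M$ in $\mathcal{H}^{hyp}(g-1,g-1)$, arguing that rational proportionality of the $c_i$ would push all nontriviality onto the heights and produce a configuration ruled out by those structural constraints.
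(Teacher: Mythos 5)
Your first claim is proved essentially as in the paper: bound the twist space by two (rank one, rel one), conclude $\dim_\C W^\perp \le 2$ via Lemma~\ref{L:Wcyl2}, get $\dim_\C W^\perp \ge 2$ from the hypothesis on moduli, and exhibit $m$ and $(q_i/c_i)$ as spanning vectors. Your explicit check that these two vectors are independent (via the sign alternation of $q$ on adjacent cylinders) is a detail the paper leaves implicit, and it is correct; note also that $n\ge 2$ already follows from the hypothesis that not all moduli are rationally related, so you do not need to appeal to the stratum. The structure of your second claim --- the inclusion $W\cdot\mathrm{diag}(q_i)\subseteq U$ from the first claim, plus a dimension count --- is also exactly the paper's.

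However, there is a genuine gap at the step you yourself flag as ``the main obstacle'': showing that the $\Q$-span of $\{1/c_i\}_{i=1}^n$ has dimension at least two, equivalently that $U$ has codimension at least two. You propose to derive this from the relation $\sum_j q_j c_j = 0$ together with unspecified ``structural constraints'' of rank-one rel-one nonarithmetic orbit closures, but no argument is given, and the relation $\sum_j q_j c_j = 0$ by itself is compatible with all $c_i$ being rational multiples of one another. The paper closes this step with a single citation: Wright~\cite[Theorem 1.9]{Wcyl} identifies the (holonomy) field of definition of $\M$ with the field generated by ratios of circumferences of equivalent cylinders, so if all the $c_i$ were rationally proportional then $\M$ would be arithmetic, contradicting Assumption~\ref{A2}. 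This is the same result the paper invokes repeatedly for exactly this purpose (e.g.\ in Lemma~\ref{L:arithmetic}), and it is the missing ingredient in your proposal; without it, or some substitute of comparable strength, the dimension count does not close and the equality $U = W\cdot\mathrm{diag}(q_i)$ does not follow.
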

\begin{proof}
Recall that $W$ is the vector space of rational homogeneous linear relations satisfied by the moduli $m$ of the horizontal cylinders. By Lemma~\ref{L:Wcyl2}, each element of $W^\perp$ corresponds to an element of the twist space, which is at most two-dimensional since $\M$ is rank one rel one. Therefore, $W^\perp$ is no more than two dimensional. Since not all horizontal cylinders have a rational ratio of moduli, $W$ is at least, and hence exactly, codimension two. Since $W^\perp$ contains $m$ by definition and since $\eta$ belongs to the twist space by assumption we see that $W^\perp$ is spanned by $m$ and $(q_i/c_i)_{i=1}^n$.

By Assumption~\ref{A2}, $\M$ is nonarithmetic and so, by Wright~\cite[Theorem 1.9]{Wcyl}, the $\Q$-span of $\{1/c_i\}_{i=1}^n$ in $\R$ has dimension at least two. Therefore, the subspace $U$ of rational homogeneous linear relations satisfied by $(1/c_i)_{i=1}^n$ is at least codimension two. Since $(1/c_i)_{i=1}^n$ satisfies all the rational linear relations in the codimension two subspace $W \cdot \mathrm{diag}(q_i)$ we see that $U$ coincides with this space.  
\end{proof}

\begin{rem}
The second statement of Lemma~\ref{L:facts2} is one of the key observations of this paper. It shows that the rational homogeneous linear relations that the moduli of horizontal cylinders satisfy can be recovered from the relations satisfied by the reciprocals of the lengths of core curves (and vice versa). 
\end{rem}

\begin{lemma}\label{L:arithmetic}
It is not the case that all contracting horizontal cylinders in both the $+\eta$ and $-\eta$ directions collapse simultaneously.
\end{lemma}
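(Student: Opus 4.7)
The plan is to argue by contradiction: assume that every contracting horizontal cylinder in both the $+\eta$ and $-\eta$ directions collapses at the same time. Moving in the $\eta$ direction changes the height of $C_j$ at a rate proportional to $q_j$, so simultaneous collapse in each direction forces $h_j$ to depend only on $q_j$. Writing $h_+$ and $h_-$ for the two common heights,
\[
m \;=\; \tfrac{h_+ + h_-}{2}\,(1/c_j)_{j=1}^n \;+\; \tfrac{h_+ - h_-}{2}\,(q_j/c_j)_{j=1}^n,
\]
so $m \in \span\{(1/c_j)_j,(q_j/c_j)_j\}$.

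Combined with Lemma~\ref{L:facts2}, which identifies $W^\perp$ as the two-dimensional span of $m$ and $(q_j/c_j)_j$, the display above forces $W^\perp = \span\{(1/c_j)_j,(q_j/c_j)_j\}$. In particular $(1/c_j)_j \in W^\perp$, and Lemma~\ref{L:Wcyl2} then produces the real twist tangent vector
\[
\zeta \;:=\; \sum_{j=1}^n c_j \cdot (1/c_j) \cdot \gamma_j^* \;=\; \sum_{j=1}^n \gamma_j^* \;\in\; T_{(X,\omega)}\M,
\]
which simultaneously shears every horizontal cylinder by the same amount.

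To close out, I would pair $\zeta$ with the real twist $\sum_j q_j \gamma_j^* \in T_{(X,\omega)}\M$ (obtained from Lemma~\ref{L:Wcyl2} applied to $(q_j/c_j)_j \in W^\perp$) to exhibit the real tangent vectors $\sum_{j\in S_\pm}\gamma_j^*$, where $S_\pm := \{j : q_j = \pm 1\}$. By Lemma~\ref{L:facts} together with the count $\dim_\R W^\perp = 2$, the Wright cylinder equivalence classes of $(X,\omega)$ in $\M$ are exactly $S_+$ and $S_-$; so by Wright's cylinder deformation theorem every cylinder-preserving real tangent vector supported on $S_\pm$ must be a scalar multiple of $\sum_{j\in S_\pm} m_j\gamma_j^*$. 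Comparing with the coefficient-one vector $\sum_{j\in S_\pm}\gamma_j^*$ then forces all $m_j$, and equivalently all $c_j$, to be constant within each of $S_+$ and $S_-$. Thus $(X,\omega)$ carries only two distinct shapes of horizontal cylinders.

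The main obstacle is this final step: ruling out the rigid configuration produced above, using the combinatorial structure of horizontal cylinder decompositions in $\cH^{hyp}(g-1,g-1)$ developed in Lindsey~\cite{Lindsey} and Apisa~\cite{Apisa-hyp}, together with the nonarithmetic hypothesis. Concretely, I expect one must show that a surface in $\cH^{hyp}(g-1,g-1)$ whose horizontal cylinder decomposition consists of two classes of congruent cylinders placed symmetrically with respect to the hyperelliptic involution is forced to have $c_+/c_-$ rational (hence an arithmetic ratio of moduli), contradicting Assumption~\ref{A2}.
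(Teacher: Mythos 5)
Your reduction up through the construction of the twist vectors $\sum_{j\in S_\pm}\gamma_j^*$ is essentially sound (modulo the fact that Lemma~\ref{L:facts2} requires that not all moduli be rationally related, which must first be arranged via Lemma~\ref{L:facts}). But the argument breaks at the point where you try to extract information about the $c_j$. The cylinder deformation theorem produces the standard shear $\sum_{c\in\cC}h_c\gamma_c^*$ of an equivalence class, not $\sum_{c\in\cC}m_c\gamma_c^*$; since $h_j$ is already constant on each $S_\pm$ under your hypothesis, comparing $\sum_{j\in S_\pm}h_j\gamma_j^*$ with $\sum_{j\in S_\pm}\gamma_j^*$ yields no new information, and your conclusion that the circumferences are \emph{constant} within each group does not follow. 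What does follow --- because $W^\perp$ is a two-dimensional subspace defined over $\Q$ spanned by the two disjointly supported vectors whose nonzero entries are $1/c_j$ on $S_+$ and on $S_-$ respectively --- is that the circumferences are \emph{rationally related} within each group; this weaker statement is all that is needed.

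The more serious issue is the step you yourself flag as ``the main obstacle'': linking the circumference scales of the two groups. This is where the entire content of the lemma sits, and the combinatorics of $\mathcal{H}^{hyp}(g-1,g-1)$ is not the right tool --- indeed your proposal never uses the one hypothesis that does the work, namely that $\eta$ is a \emph{rel} deformation. Since $\eta\in\ker p$, the class $\sum_j q_j\gamma_j^*$ vanishes in absolute cohomology, so $\sum_j q_j[\gamma_j]=0$ in $H_1(X;\Q)$; pairing with $\omega$ gives $\sum_{j\in S_+}c_j=\sum_{j\in S_-}c_j$. Combined with the within-group rationality above, this makes all core-curve lengths rationally related, and Wright's Theorem 1.9 then forces $\M$ to be arithmetic, contradicting Assumption~\ref{A2}. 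Without this homological relation the two scales are simply not coupled, and the rigidity statement you hope to extract from the hyperelliptic cylinder combinatorics would have to smuggle it in; as written, the proof is incomplete.
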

\begin{proof}
Suppose not; then all contracting (resp. expanding) horizontal cylinders have identical heights, i.e. there are two disjoint groups of horizontal cylinders, $\cA$ and $\cB$, so that $\sum_{a \in \cA} \gamma_a^*$ and $\sum_{b \in \cB} \gamma_b^*$ span the tangent space. This implies that all cylinders in a given group have a constant, and hence rational, ratio of moduli. Since cylinders in a given group have identical heights, any two such cylinders have a rational ratio of lengths of core curves. The linear relation on homology classes of core curves given by $\eta$ now implies that all core curves have a rational ratio of lengths and hence, by Wright~\cite[Theorem 1.9]{Wcyl}, that $\M$ is a rank one arithmetic orbit closure, which contradicts Assumption~\ref{A2}. 
\end{proof}

\begin{defn}
Two cylinders on a translation surface in $\M$ will be said to be sub-equivalent if they are equivalent and they have a generically rational ratio of moduli. A maximal collection of sub-equivalent cylinders will be called a sub-equivalence class. 
\end{defn}

\begin{prop}\label{P:3sc}
If $(X, \omega)$ contains a rel deformation in its twist space, then there are three sub-equivalence classes and sub-equivalent horizontal cylinders have identical heights.
\end{prop}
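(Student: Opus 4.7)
The plan is to characterize sub-equivalence via the pair $(q_k, h_k)$, then count. I first invoke Lemma~\ref{L:facts} to pick $(X, \omega) \in \M$ whose twist space contains $\eta$ and for which the hypothesis of Lemma~\ref{L:facts2} holds, so that $W^\perp = \span_{\R}\{m, (q_k/c_k)\}$.

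Next, I would establish that $C_i$ and $C_j$ are sub-equivalent if and only if $q_i = q_j$ and $h_i = h_j$. For the necessary direction, assume $m_i/m_j = r$ for a fixed $r \in \Q$ on a neighborhood; differentiating along $\eta$, which is purely imaginary and so leaves widths $c_k$ fixed while shifting heights by $q_k$ (giving $\dot m_k = q_k/c_k$), yields $q_i/c_i = r\, q_j/c_j$. Combined with $r = h_i c_j/(h_j c_i)$, this gives $q_i h_j = q_j h_i$, and positivity of heights forces $q_i = q_j$ and $h_i = h_j$. For sufficiency, if these equalities hold then both generators $m$ and $(q_k/c_k)$ of $W^\perp$ project to vectors proportional to $(1/c_i, 1/c_j)$ under the $(i,j)$-coordinate projection, so the image is $1$-dimensional. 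Because $W^\perp$ admits a rational basis, any $1$-dimensional projection of it is a rational line, so $c_i/c_j$ and hence $m_i/m_j = c_j/c_i$ are rational. The projection structure is determined by $W$, which is locally constant, so the ratio persists on a neighborhood.

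It follows that sub-equivalence classes biject with the distinct values of $(q_k, h_k)$. Lemma~\ref{L:arithmetic} forbids both $q$-groups from each consisting of a single common height (that would make all contracting cylinders collapse simultaneously in both $\pm \eta$ directions), so at least one group has at least two distinct heights, producing at least three sub-equivalence classes.

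The main obstacle is the upper bound of three. My approach is to parametrize the moduli as $m = \sum_S \lambda_S a_S$, where $a_S$ is a fixed rational vector per class $S$ and $\lambda_S$ is proportional to the common height $H_S$, then examine the tangent directions in $\lambda$-space coming from the two-dimensional imaginary twist space. These are spanned by $(q_S)_S$ (from $\eta$) and $(H_S)_S$ (from the non-rel twist direction). The equality $\dim W = n - 2$ forces $(p+p') - 2$ independent rational relations among the $\lambda_S$, so if there were four or more classes the span of these two tangent vectors would have to be defined over $\Q$, producing rational dependencies among the $H_S$. The hard step is ruling these out using the bipartite cylinder graph structure and the hyperelliptic involution developed in Apisa~\cite{Apisa-hyp}, in conjunction with the non-arithmeticity (Assumption~\ref{A2}) and the genericity from Lemma~\ref{L:facts}.
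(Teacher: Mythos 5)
Your characterization of sub-equivalence via the pair $(q_k, h_k)$ is a nice reorganization: the necessary direction (differentiate $m_i = r\,m_j$ along $\eta$ to get $q_i h_j = q_j h_i$, hence $q_i = q_j$ and $h_i = h_j$) cleanly delivers the ``identical heights'' half of the proposition, and together with Lemma~\ref{L:arithmetic} it gives the lower bound of three classes. This is genuinely different from the paper, which only extracts the height statement at the end from the explicit form of $i\eta$ as a combination of the $\gamma_c^*$. The sufficiency direction is also plausible (the coordinate projection of the rational subspace $W^\perp$ onto the $(i,j)$-coordinates is a rational line spanned by $(1/c_i, 1/c_j)$), though your claim that ``$W$ is locally constant'' so the rational ratio ``persists on a neighborhood'' is asserted rather than proved --- $W$ can jump on special loci, and $m_i/m_j$ is a ratio of products of periods, not a linear function, so persistence needs an argument.

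The genuine gap is the upper bound: you never show there are \emph{at most} three values of $(q_k,h_k)$, and you say so yourself (``the hard step is ruling these out\dots''). This is the entire content of the paper's argument. Nothing in the data visible on $(X,\omega)$ alone --- the two-dimensional twist space, the codimension-two relation space $W$, the bipartite cylinder graph, the hyperelliptic involution --- appears sufficient to exclude, say, two distinct heights in each of the contracting and expanding groups. The paper's key idea is a surgery: flow along $\eta$ until the first group $\cA_1$ of contracting cylinders collapses, pass through the degenerate surface $(Y,\zeta)$, and continue to a new horizontally periodic surface $(X',\omega')$ on which $\eta$ again lies in the twist space with new signs $q_i'$. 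Because the circumferences $c_i$ are unchanged by the rel flow, Lemma~\ref{L:facts2} applies on both surfaces and yields $V\cdot\mathrm{diag}(q_i) = U = V'\cdot\mathrm{diag}(q_i')$ (Equation~\ref{E1}); comparing relations that must hold in both $V$ and $V'$ on the common degeneration $(Y,\zeta)$ forces cylinders with the same eigenvalue of $D=\mathrm{diag}(q_iq_i')$ to have rationally related moduli, which caps the count at three. Without this second surface (or some substitute producing a second, independent set of rational relations), your $\lambda$-space parametrization has no mechanism to generate the required rational dependencies among the $H_S$, so the proof is incomplete.
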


\begin{rem}
If Theorem~\ref{T1} holds, then there is a map from $(X, \omega)$ to a horizontally periodic translation surface in $\mathcal{H}(1,1)$ with three horizontal cylinders. A sub-equivalence class in $(X, \omega)$ is then just a collection of horizontal cylinders that map to a given cylinder on the surface in $\mathcal{H}(1,1)$. Proposition~\ref{P:3sc} must hold if Theorem~\ref{T1} does. 
\end{rem}

\begin{proof}
If necessary, by Lemma~\ref{L:facts}, we move an arbitrarily small distance in the $\eta$ direction to guarantee that no horizontal cylinder that expands in the $\eta$ direction has a modulus that is a rational multiple of one that contracts (this allows us to apply Lemma~\ref{L:facts2}). By Lemma~\ref{L:arithmetic}, we may assume (perhaps after multiplying $\eta$ by $-1$) that in the $\eta$ direction, not all contracting horizontal cylinders simultaneously collapse. Let $\cA_1$ be those that collapse first and let $\cA_2$ be the remaining horizontal cylinders that contract in the $\eta$ direction. Let $\cA_3$ be the horizontal cylinders that expand in the $\eta$ direction. Finally, suppose that the cylinders in $\cA_2 \cup \cA_3$ are the ones labelled $\left(C_1, \hdots, C_k \right)$. 

Notice that a cylinder in $\cA_i$ and a cylinder in $\cA_j$ are sub-equivalent only if $i \ne j$. We will show conversely that $\cA_i$ is a sub-equivalence class for each $i = 1, 2, 3$.

Suppose without loss of generality, perhaps after applying $\begin{pmatrix} 1 & t \\ 0 & 1 \end{pmatrix}$ for arbitrarily small $t$, that when $\cA_1$ collapses, no vertical saddle connection vanishes. Let $(Y, \zeta)$ be the translation surface formed when $\cA_1$ collapses (by moving in the $\eta$ direction). This surface is horizontally periodic and the horizontal cylinders are precisely the ones belonging to $\cA_2 \cup \cA_3$. Since $\M$ is nonarithmetic, Wright~\cite[Theorem 1.9]{Wcyl} implies that not all elements of $(c_i)_{i=1}^k$ are rational multiples of each other.  Let $U \subseteq \Q^k$ be the subset of rational homogeneous linear relations that $(1/c_i)_{i=1}^k$ satisfy. By Lemma~\ref{L:facts2} applied to $(X, \omega)$, the $\Q$-linear span of $\{ 1/c_i \}_{i=1}^n$ in $\R$ is two-dimensional, and so $U$ is a codimension two subspace. 

Notice that while $\eta$ no longer lies in the twist space of $(Y, \zeta)$, it still corresponds to a tangent vector in $T_{(Y, \zeta)} \M$ and so we may continue to move in the $\eta$ direction. Doing so creates a horizontally periodic translation surface $(X', \omega')$ that contains $\eta$ in its twist space (notice that the surface is horizontally periodic because it contains a horizontal cylinder and any such surface in $\M$ is horizontally periodic by Wright~\cite[Theorem 1.5]{Wcyl} since $\M$ is rank one). 

On $(X', \omega')$ the $\eta$ direction remains well-defined, although it becomes a different linear combination of the core curves of horizontal cylinders. Since the coefficients of $\eta$ are purely imaginary on $(X, \omega)$, they remain purely imaginary on $(X', \omega')$ and so the $\eta$ direction on $(X', \omega')$ agrees with the direction defined in Definition~\ref{D:eta}. As at the start of this proof we may apply Lemma~\ref{L:facts} to assume that no horizontal cylinder that expands in the $\eta$ direction has a modulus that is a rational multiple of one that contracts (and so Lemma~\ref{L:facts2} holds on $(X', \omega')$). 

To visualize these definitions in the case of a surface in $\mathcal{H}(1,1)$ see Figure~\ref{F:mnemonic}; in that figure, cylinder $C_i$ belongs to $\cA_i$ for $i = 1, 2, 3$, $a$ and $b$ are used to indicate side identifications (along with the convention that unlabelled opposite sides are identified). The $\eta$ direction is written as a linear combination of core curves whenever possible with $\gamma_1'$ denoting the core curve of the cylinder $C_1'$.

\begin{figure}[H]
    \begin{subfigure}[b]{0.3\textwidth}
        \centering
        \resizebox{\linewidth}{!}{\begin{tikzpicture}
		\draw (0,0) -- (0,1) -- (.5, 1.5) -- (1.5, 1.5) -- (1,1) -- (2,1) -- (2,-1.5) -- (1,-1.5) -- (1,0) -- (0,0);
		\draw[dotted] (0,1) -- (1,1) -- (1,0) -- (2,0);
		\node at (.7, 1.25) {$C_1$}; \node at (1, .5) {$C_3$}; \node at (1.5, -.75) {$C_2$};
		\node at (-1, .5) {$$}; \node at (3, .5) {$$};
	\end{tikzpicture}
        }
        \caption{$(X, \omega)$ \\ $-i\eta = \gamma_3^* - \gamma_1^* - \gamma_2^* $}
    \end{subfigure}
        \begin{subfigure}[b]{0.3\textwidth}
        \centering
        \resizebox{\linewidth}{!}{\begin{tikzpicture}
        		\draw (0,0) -- (0,1.5) -- (2,1.5) -- (2,-1) -- (1,-1) -- (1,0) -- (0,0);
		\draw[dotted] (1,1.5) -- (1,0) -- (2,0);
		\draw[black, fill] (.5,1.5) circle[radius=1pt]; 
		\draw[black, fill] (.5,0) circle[radius=1pt]; 
		\node at (.25, 1.7) {$a$}; \node at (.75, 1.7) {$b$};
		\node at (.25, -.2) {$b$}; \node at (.75, -.2) {$a$}; 
		\node at (1, .75) {$C_3$}; \node at (1.5, -.5) {$C_2$};
		\node at (-1, .5) {$$}; \node at (3, .5) {$$};
	\end{tikzpicture}
        }
        \caption{$(Y, \zeta)$}
    \end{subfigure}
        \begin{subfigure}[b]{0.3\textwidth}
        \centering
        \resizebox{\linewidth}{!}{\begin{tikzpicture}
        		\draw (0,0) -- (0,1.75) -- (.5, 1.25) -- (1, 1.75) -- (2,1.75) -- (2,-.5) -- (1,-.5) -- (1,0) -- (.5, .5) -- (0,0);
		\draw[dotted] (0, 1.25) -- (2, 1.25); 
		\draw[dotted] (0, .5) -- (2, .5);
		\draw[dotted] (1, 0) -- (2,0);
		\node at (.25, 1.75) {$a$}; \node at (.75, 1.75) {$b$};
		\node at (.25, 0) {$b$}; \node at (.75, 0) {$a$};
		\node at (.5, .87) {$C_3$}; \node at (1.5, -.25) {$C_2$}; \node at (1.5, 1.5) {$C_1'$};
		\node at (-1, .5) {$$}; \node at (3, .5) {$$};
	\end{tikzpicture}
        }
        \caption{$(X', \omega')$ \\ $-i\eta = \gamma_1'^* - \gamma_3^* - \gamma_2^* $}
    \end{subfigure}
\caption{An illustration of the preceding discussion} 
\label{F:mnemonic}
\end{figure}
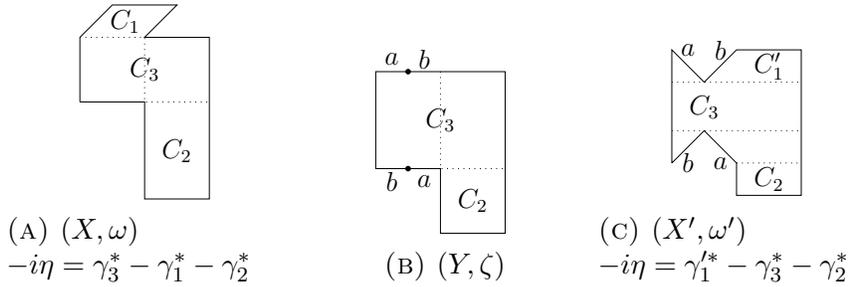

Recall that $(q_i)_{i=1}^k \in \{ \pm 1 \}_{i=1}^k$ is chosen so that $-i \eta$ restricted to the cylinder in $\cA_2 \cup \cA_3$ on $(X, \omega)$ is $\sum_{i=1}^k q_i \gamma_i^*$. Define $(q_i')_{i=1}^k$ analogously for $(X', \omega')$. Let $V \subseteq \Q^k$ be the subspace of rational homogeneous linear equations that the moduli of cylinders in $\cA_2 \cup \cA_3$ satisfy on $(X, \omega)$ and let $V'$ be the analogous subspace for $(X', \omega')$. By Lemma~\ref{L:facts2}, 
\begin{equation}\label{E1}
V \cdot \mathrm{diag}\left( q_1, \hdots, q_k \right) = U = V' \cdot \mathrm{diag}\left( q_1', \hdots, q_k' \right)
\end{equation}
Define $D = \mathrm{diag}\left( q_1 q_1', \hdots, q_k q_k' \right)$, which is a diagonal matrix with all diagonal entries belonging to $\{\pm 1\}$. Notice that a cylinder in $\cA_2 \cup \cA_3$ has $D$-eigenvalue $+1$ if it is contracted (or expanded) in the $\eta$ direction on both $(X, \omega)$ and $(X', \omega')$. If a cylinder is contracted in the $\eta$ direction on one surface, but not the other then its $D$-eigenvalue is $-1$.

\noindent \textbf{Step 1: The $+1$-eigenspace of $D$ contains $\cA_2$ and the $-1$-eigenspace of $D$ is nonempty}


By Definition~\ref{D:eta}, no two horizontal cylinders that contract in the $\eta$ direction are adjacent. In particular, no cylinder in $\cA_2$ is adjacent to a cylinder in $\cA_1$. Since the cylinders in $\cA_1$ are exactly those that collapse on $(Y, \zeta)$, it follows that if $\gamma$ is a cross curve of a cylinder in $\cA_2$ on $(X, \omega)$, then it remains one on $(X', \omega')$. Since the period of $\gamma$ is linear in the $\eta$ direction, any cylinder in $\cA_2$ belongs to the $+1$-eigenspace of $D$.

When a cylinder $C_1$ in $\cA_1$ collapses, it contains a cross curve $s$ - the imaginary part of whose period is decreasing in the $\eta$ direction - that remains a saddle connection on $(Y, \zeta)$ on the boundary of a horizontal cylinder $C_3$. Since cylinders in $\cA_1$ can only border cylinders in $\cA_3$, $C_3$ belongs to $\cA_3$. Since the period of $s$ is linear in the $\eta$ direction, moving in the $\eta$ direction from $(Y, \zeta)$ causes the imaginary part of the period of $s$ to continue to decrease and therefore to form a new cylinder $C_1'$ adjacent to $C_3$ (see Figure~\ref{F:mnemonic}). Since cylinders that expand in the $\eta$ direction, like $C_1'$, can only border cylinders that contract in the $\eta$ direction, $C_3$ contracts in the $\eta$ direction on $(X', \omega')$ and so it belongs to the $-1$-eigenspace of $D$.

\noindent \textbf{Step 2: If two cylinders have the same $D$-eigenvalue, then they have a rational ratio of moduli}

Suppose to a contradiction that two cylinders with $D$-eigenvalue $+1$ do not have a rational ratio of moduli and suppose without loss of generality that their moduli are $m_1$ and $m_2$. Since $V$ is codimension two, all moduli of horizontal cylinders are a rational linear combination of $m_1$ and $m_2$. Suppose without loss of generality that $C_3$ belongs to the $-1$ eigenspace of $D$ and that 
\[ m_3 = \alpha_1 m_1 + \alpha_2 m_2 \qquad \text{for } \alpha_1, \alpha_2 \in \Q \]
is an equation that holds in $V$. By Equation~\ref{E1}, 
\[ -m_3 = \alpha_1 m_1 + \alpha_2 m_2 \]
is an equation that holds in $V'$. Since both equations must hold on $(Y, \zeta)$ it follows that $m_3 = 0$ on $(Y, \zeta)$, which contradicts the fact that $C_3$ is a positive-area cylinder there. The same argument holds for two cylinders of $D$-eigenvalue $-1$.

\noindent \textbf{Step 3: Any two cylinders in $\cA_i$, for $i = 1, 2, 3$, have identical heights and a rational ratio of moduli}

Since cylinders in $\cA_2$ belong to the $+1$-eigenspace of $D$ and since any other such horizontal cylinder must have a modulus that is a rational multiple of those of cylinders in $\cA_2$, it follows that the $+1$-eigenspace of $D$ cannot contain any cylinder in $\cA_3$. Therefore, the cylinders in $\cA_i$, for $i = 2, 3$ have a rational ratio of moduli by the previous step.  The same holds for the cylinders in $\cA_1$ since they have identical heights and hence a generically constant, hence rational, ratio of moduli.

Since any two cylinders in $\cA_1$ have identical heights and since any two cylinders in $\cA_2$, or $\cA_3$, have a rational ratio of moduli,
\[ i\eta = \sum_{c \in \cA_1 \cup \cA_2} \gamma_c^* - \sum_{c \in \cA_3} \gamma_c^* = \sum_{a \in \cA_1} \gamma_a^* + \alpha \sum_{b \in \cA_2} h_b \gamma_b^* - \beta \sum_{c \in \cA_3} h_c \gamma_c^*  \]
where $\alpha$ and $\beta$ are constants. Therefore, any two cylinders in $\cA_i$ have identical heights for $i = 1, 2, 3$. 
\end{proof}

\begin{rem}
Proposition~\ref{P:3sc} immediately implies that the field of definition of $\M$ is quadratic. The proof is omitted since a stronger result will be established in the following section. 
\end{rem} 

\section{Proof of Theorem~\ref{T1}}

By Apisa~\cite{Apisa-hyp}, Theorem~\ref{T1} holds for orbit closures of complex dimension greater than three. Since closed orbits are exactly those whose orbit closures are dimension two, it suffices to consider orbit closures of complex dimension three. Those that are arithmetic are loci of branched covers of tori. Therefore, throughout this section, Assumption~\ref{A2} remains in effect. Using Proposition~\ref{P:3sc}, we will make the following assumption. 

\begin{ass}
Fix a horizontally periodic translation surface $(X, \omega)$ that belongs to $\M$ and that has the relative deformation $\eta$ in its twist space. Let $\cA_i$, for $i = 1, 2, 3$, denote the three sub-equivalence classes and suppose that a cylinder in $\cA_i$ has height $h_i$. Suppose that the relative deformation is 
\[ \sum_{c \in \cA_1 \cup \cA_2} \gamma_c^* - \sum_{c \in \cA_3} \gamma_c^* \]
We will say, abusing notation, that a horizontal saddle connection belongs to $\cA_i$ for $i = 1, 2$ if it borders a cylinder in $\cA_i$. Notice that every saddle connection belongs to exactly one of $\cA_1$ or $\cA_2$.
\end{ass}

We will repeatedly use the following construction in the sequel. 

\begin{defn}
A horizontal saddle connection $s$ on $(X, \omega)$ borders two cylinders, say $C_1$ and $C_2$. We will say that $s$ may be put in standard position if a cylinder preserving deformation may be applied to $(X, \omega)$ so that there is a vertical cylinder $V$ that is contained in $C_1 \cup C_2$, that only intersects the core curves of $C_1$ and $C_2$ once, and that contains $s$ as a cross curve, i.e. $s$ is contained in $V$ and intersects the core curve of $V$ exactly once.
\end{defn}

\begin{lemma}\label{L:stp}
Any horizontal saddle connection on $(X, \omega)$ may be put in standard position.
\end{lemma}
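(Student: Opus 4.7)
The plan is to construct the desired vertical cylinder $V$ explicitly by producing, via the hyperelliptic involution $\tau$, a second horizontal saddle connection $\tau(s)$ of length $|s|$ lying on the opposite boundary components of $C_1$ and $C_2$, and then aligning $s$ and $\tau(s)$ vertically inside $C_1\cup C_2$ by a cylinder preserving shear.

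Let $s$ bound the horizontal cylinders $C_1$ (above) and $C_2$ (below), and let $\tau$ be the hyperelliptic involution of $(X,\omega)$, satisfying $\tau^*\omega=-\omega$. Since $\tau$ preserves the horizontal foliation while reversing its orientation, it sends horizontal cylinders to horizontal cylinders, and horizontal saddle connections to horizontal saddle connections of the same length. Using the explicit description of horizontal cylinders in $\hyp(g-1,g-1)$ from~\cite{Lindsey} and~\cite{Apisa-hyp}, I would verify that $C_1$ and $C_2$ are $\tau$-invariant, so that $\tau$ restricts to each $C_i$ as a rotation by $\pi$ about its center, fixing two Weierstrass points and exchanging the top and bottom boundaries. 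It then follows that $\tau(s)$ is a horizontal saddle connection of length $|s|$ that lies on the top boundary of $C_1$ and on the bottom boundary of $C_2$.

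Next, I would apply a cylinder preserving deformation to shear $C_1$ and $C_2$ so that $\tau(s)$ lies directly above $s$ in $C_1$ and directly below $s$ in $C_2$. By Lemma~\ref{L:facts2}, $W^\perp$ is spanned over $\bC$ by $m$ and $(q_i/c_i)_{i=1}^n$, which by Lemma~\ref{L:Wcyl2} gives the real twist space two independent shear directions proportional to $\sum_i h_i\gamma_i^*$ and $\sum_i q_i\gamma_i^*$. Since $C_1$ and $C_2$ lie in different sub-equivalence classes (adjacent cylinders have opposite $q$-signs by Definition~\ref{D:eta}, and the sub-equivalence classes are as described in Proposition~\ref{P:3sc}), the pairs $(h_{C_1},q_{C_1})$ and $(h_{C_2},q_{C_2})$ are linearly independent. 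Hence the two-parameter family of shears acts independently on $C_1$ and $C_2$, and we may choose a deformation realizing the required alignment.

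Once the alignment is arranged, the rectangle of width $|s|$ in $C_1$ bounded below by $s$ and above by $\tau(s)$, together with the rectangle of width $|s|$ in $C_2$ bounded above by $s$ and below by $\tau(s)$, glue along the two identifications of $s$ and of $\tau(s)$ to form a vertical cylinder $V$ of width $|s|$ and height $h_{C_1}+h_{C_2}$. By construction $V\subset C_1\cup C_2$, $V$ contains $s$ as a cross curve, and $V$ intersects each of the core curves of $C_1$ and $C_2$ exactly once. The main obstacle in this plan is the structural step of confirming that $C_1$ and $C_2$ are $\tau$-invariant in $\hyp(g-1,g-1)$ and that $\tau(s)$ lies precisely on the opposite boundary of each; if either cylinder were not $\tau$-invariant, a variant of the argument would have to proceed through $\tau(C_1)$ and $\tau(C_2)$, and the independence of the relevant shear parameters would require a more delicate case analysis than the sub-equivalence count immediately provides.
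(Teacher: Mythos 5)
Your proof is correct and follows essentially the same route as the paper's: both use the hyperelliptic involution to place $\tau(s)$ on the opposite boundaries of the two adjacent cylinders (which necessarily carry opposite signs of $q$), and both exploit the two real shear directions $\sum h_i\gamma_i^*$ and $\sum q_i\gamma_i^*$ to align $s$ and $\tau(s)$ in $C_1$ and $C_2$ independently. The paper merely packages your $2\times 2$ linear-independence argument as two explicit steps (a global horocycle shear to align in the $\cA_3$ cylinder, then the combination $\sum_{c}q_c\gamma_c^*+\tfrac{1}{h_3}\sum_c h_c\gamma_c^*$, which fixes $\cA_3$, to align in the other), and it likewise takes the $\tau$-invariance of the cylinders as given.
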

\begin{proof}
Suppose that $C_1$ and $C_2$ are the two horizontal cylinders bordering a horizontal saddle connection $s$ and suppose furthermore that $C_1$ belongs to $\cA_1 \cup \cA_2$ and that $C_2$ belongs to $\cA_3$. Shear all horizontal cylinders so that $s$ and its image under the hyperelliptic involution lie directly above one another in $C_2$. Notice that the tangent space contains
\[ \sum_{c \in \cA_1 \cup \cA_2} \gamma_c^* - \sum_{c \in \cA_3} \gamma_c^* + \sum_{i=1}^3 \sum_{c \in \cA_i} \frac{h_i}{h_3} \gamma_c^* = \sum_{c \in \cA_1 \cup \cA_2} \left( 1 + \frac{h_c}{h_3} \right) \gamma_c^* \]
This deformation fixes all cylinders in $\cA_3$ while shearing those in $\cA_1 \cup \cA_2$. This allows $s$ and its image under the hyperelliptic involution to be put directly above each other in $C_1$, putting $s$ in standard position. 
\end{proof}

\begin{prop}\label{P:main}
For $i = 1, 2$, all saddle connections in $\cA_i$ have identical lengths and if any one is put in standard position, all occur as cross curves of vertical cylinders that only pass through saddle connections in $\cA_i$.
\end{prop}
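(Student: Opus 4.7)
The plan is to apply Lemma~\ref{L:stp} to obtain a vertical cylinder containing $s$, and then use the rank one rel one structure together with a vertical analog of the sub-equivalence class decomposition from Section 3 to propagate the conclusion to every saddle connection in $\cA_i$.

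First I would fix $s \in \cA_i$ and put it in standard position via Lemma~\ref{L:stp}, producing a vertical cylinder $V$ that contains $s$ as a cross curve and satisfies $V \subseteq C \cup C'$ for some $C \in \cA_i$ and $C' \in \cA_3$. The core curve of $V$ crosses each of the horizontal core curves of $C$ and $C'$ exactly once, so it crosses precisely two horizontal saddle connections; both lie on the common boundary of $C$ and $C'$ and therefore belong to $\cA_i$. This immediately handles the ``only passes through $\cA_i$'' assertion for the vertical cylinder $V$ we constructed.

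Next I would invoke Wright~\cite[Theorem 1.5]{Wcyl}: because $\M$ is rank one and $V$ is a vertical cylinder in $(X,\omega)$, the surface $(X,\omega)$ is vertically periodic. I would then run the analog of Section~3 in the vertical direction: after (if necessary) a small cylinder preserving deformation, Lemma~\ref{L:Wcyl1} provides a vertically periodic surface in $\M$ whose vertical twist space contains a vertical rel deformation $\eta^v$, and Proposition~\ref{P:3sc} applied verbatim in the vertical direction partitions the vertical cylinders into three sub-equivalence classes $\cB_1, \cB_2, \cB_3$ with identical heights within each class. Crucially, the ``height'' of a vertical cylinder is the length of each horizontal cross curve it contains.

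To conclude, I would show that every saddle connection $s' \in \cA_i$ is a cross curve of a vertical cylinder in the same vertical sub-equivalence class as $V$. The natural candidate is the maximal vertical strip through $s'$; by the adjacency constraints (cylinders in $\cA_1 \cup \cA_2$ only border cylinders in $\cA_3$), this strip passes through one cylinder in $\cA_i$ and one in $\cA_3$, so if it closes up it has circumference $h_i + h_3$, matching that of $V$. To see that the strip actually closes up into a vertical cylinder without being cut by a vertical saddle connection, I would argue via the vertical analog of Step~2 of the proof of Proposition~\ref{P:3sc}: an obstructing vertical saddle connection would produce a vertical sub-equivalence constraint forcing the modulus of a positive-area cylinder to vanish in a degeneration, the same contradiction as in the horizontal case. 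Combining this with the vertical instance of Proposition~\ref{P:3sc} shows all such vertical cylinders are in the same sub-equivalence class as $V$ with equal heights, giving $|s'| = |s|$ together with the cross-curve-of-vertical-cylinder structure.

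I expect the main obstacle to be the matching step: transferring the horizontal sub-equivalence decomposition to a vertical decomposition that is compatible with $V$, and ruling out vertical saddle connections that could split the strip above a given saddle connection in $\cA_i$. This will likely require invoking the explicit description of horizontally periodic surfaces in $\cH^{hyp}(g-1,g-1)$ from Lindsey~\cite{Lindsey} and Apisa~\cite{Apisa-hyp}, together with the hyperelliptic involution acting as $-1$ on $H^1(X,\Sigma;\C)$ to pair and constrain the relevant vertical cylinders.
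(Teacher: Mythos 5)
Your opening moves (standard position via Lemma~\ref{L:stp}, vertical complete periodicity from rank one, and a vertical sub-equivalence decomposition) match the paper's, but there are two genuine gaps. First, Proposition~\ref{P:3sc} cannot be applied ``verbatim'' to conclude there are \emph{three} vertical sub-equivalence classes: that proposition only applies when the rel deformation lies in the (vertical) twist space of the surface at hand, and for the standard-position surface this is exactly what fails. The paper instead shows there are precisely \emph{two} vertical sub-equivalence classes, by applying the horizontal rel deformation $\epsilon\eta$ to open a gap next to $V$ that can only be entered by a new vertical cylinder of height at most $\epsilon$; since the deformed surface has at most three classes by Proposition~\ref{P:3sc} and one of them is new, the original surface had at most two.

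Second, and this is the crux, your claim that the ``maximal vertical strip'' through an arbitrary $s'\in\cA_i$ closes up into a vertical cylinder of circumference $h_i+h_3$ is not established. Step~2 of the proof of Proposition~\ref{P:3sc} concerns rational ratios of moduli for cylinders with the same $D$-eigenvalue; it says nothing about whether such a strip is obstructed by a vertical saddle connection hitting a zero, and no modulus-vanishing contradiction is available here. The paper avoids this local argument entirely: it takes $t$ to be the \emph{longest} horizontal saddle connection, of length $\ell$, so that every vertical cylinder sub-equivalent to $V$ has height $\ell$ and therefore completely contains every horizontal saddle connection it meets; letting $x$ be the height of the other vertical class, every remaining horizontal saddle connection is covered by finitely many height-$x$ cylinders and so has length $nx$ for an integer $n$. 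A further arithmetic argument, using that $\ell$ and $x$ span a two-dimensional $\Q$-vector space, pins all lengths down to $\{\ell, x\}$, and the circumference computation $a(h_1+h_3)+b(h_2+h_3)$ with $b=0$ shows the length-$\ell$ class of vertical cylinders passes only through $\cA_1\cup\cA_3$ (and likewise for $x$ and $\cA_2\cup\cA_3$). Without this global length-dichotomy argument your proof does not close, and the appeal to Lindsey's explicit geometry and the hyperelliptic involution in your last paragraph does not substitute for it.
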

\begin{rem}
This proposition is the main result of this section. Theorem~\ref{T1} follows directly from it via the arguments in Apisa~\cite[Sections 11 and 12]{Apisa-hyp}. Those arguments will be presented in the following theorem for completeness. 
\end{rem}
\begin{proof}
Since $\M$ is rank one it is completely periodic by Wright~\cite[Theorem 1.5]{Wcyl} and so whenever a saddle connection is put in standard position, the translation surface is vertically periodic. 

\noindent \textbf{Step 1: When a horizontal saddle connection is put in standard position, there are two vertical sub-equivalence classes}

Let $s$ be a horizontal saddle connection and put it in standard position. Let $V$ be the resulting vertical cylinder. 

Suppose first to a contradiction that there is only one sub-equivalence class in the vertical direction. By Proposition~\ref{P:3sc}, every vertical cylinder has identical height and so all horizontal core curves of cylinders have a rational ratio of length. By Wright~\cite[Theorem 1.9]{Wcyl}, $\M$ is arithmetic, contradicting Assumption~\ref{A2}.

Suppose now that there are at least two vertical sub-equivalence classes. Apply the rel deformation $\epsilon \left( \sum_{c \in \cA_1 \cup \cA_2} \gamma_c^* - \sum_{c \in \cA_3} \gamma_c^* \right)$. This creates a gap next to $V$ that can only be entered by a cylinder of height at most $\epsilon$ (see Figure~\ref{F:sp}).
\begin{figure}[H]
    \begin{subfigure}[b]{0.3\textwidth}
        \centering
        \resizebox{\linewidth}{!}{\begin{tikzpicture}
        		\draw (0,0) -- (2,0);
		\draw (0,1) -- (1,1);
		\draw (1,2) -- (3,2);
		\draw (2,1) -- (3,1);
		\draw[dotted] (1,0) -- (1,2);
		\draw[dotted] (2,0) -- (2,2);
		\draw[dotted] (1,1) -- (2,1);
		\node at (1.5, 2.25) {$s$}; \node at (1.5, -.25) {$s$};
		\node at (1.5, 1) {$V$};
	\end{tikzpicture}
        }
        \caption{ Standard position}
    \end{subfigure}
    \qquad
        \begin{subfigure}[b]{0.3\textwidth}
        \centering
        \resizebox{\linewidth}{!}{\begin{tikzpicture}
        		\draw (-.3,0) -- (1.7,0);
		\draw (0,1) -- (1,1);
		\draw (.7,2) -- (3,2);
		\draw (2,1) -- (3,1);
		\draw[dotted] (.7,0) -- (1,1) -- (.7, 2);
		\draw[dotted] (1.7,0) -- (2,1) -- (1.7, 2);
		\draw[dotted] (1,1) -- (2,1);
		\draw[dashed] (1,0) -- (1,2); \draw[dashed] (1.7, 0) -- (1.7, 2);
		\node at (1.2, 2.25) {$s$}; \node at (1.5, -.25) {$s$};
		\node at (1.35, 1) {$V$};
		\draw[black, fill] (.7,0) circle[radius=1pt];
		\node at (.87, -.2) {$\epsilon$};
	\end{tikzpicture}
        }
        \caption{The gap created by $\eta$}
    \end{subfigure}
\caption{The gap created using a rel deformation} 
\label{F:sp}
\end{figure}
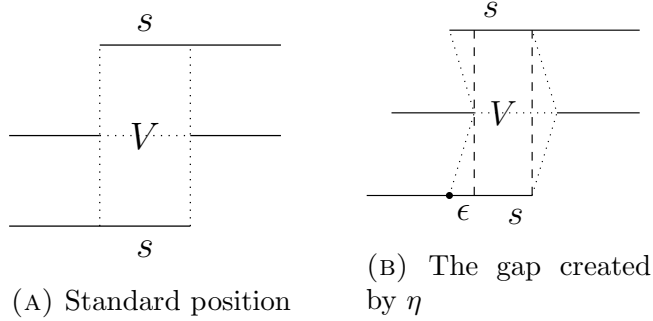
\noindent For $\epsilon$ smaller than $h/2$ where $h$ is the smallest height of a vertical cylinder on $(X, \omega)$ we see that a new equivalence class must pass through this gap and hence by Proposition~\ref{P:3sc} that on $(X, \omega)$ there were only two sub-equivalence classes. 

\noindent \textbf{Step 2: There are two numbers $\ell$ and $x$ so that all horizontal saddle connections on $(X, \omega)$ have length $\ell$ or $nx$ for some integer $n$}

Let $\ell$ be the length of the longest horizontal saddle connection. Let $t$ be a horizontal saddle connection of length $\ell$ on $(X, \omega)$. By Lemma~\ref{L:stp}, we may replace $(X, \omega)$ with a surface on which $t$ is in standard position. Let $V$ be the vertical cylinder containing $t$ as a cross curve.  Every cylinder sub-equivalent to $V$ has height $\ell$ and therefore contains every saddle connection it passes through as a height. 

Let $x$ be the height of the sub-equivalence class of vertical cylinders not containing $V$. Since any saddle connection not contained as a cross-curve of a cylinder sub-equivalent to $V$ is crossed finitely many times by cylinders of height $x$ it follows that all other saddle connections have length $nx$ for some integer $n$. 

Since the ratios of lengths of the core curves generate a field that properly contains $\Q$, we see that $\ell$ and $x$ are not rational multiples of each other. Therefore, when one saddle connection of length $\ell$ is put into standard position, all saddle connections of length $\ell$ occur as cross curves of vertical cylinders.

\noindent \textbf{Step 3: All horizontal saddle connections have length $\ell$ or $x$}

Let $u$ be the longest horizontal saddle connection of length $Nx$ for some integer $N$. Put $u$ into standard position and let $W$ be the resulting vertical cylinder. 

Suppose first that some cylinder sub-equivalent to $W$ passes through a saddle connection of length $\ell$. Since $\ell$ is not an integer multiple of $x$, it follows that cylinders sub-equivalent to $W$ account for all but $\ell - mx$ of the horizontal saddle connection for some integer $m$. Therefore, the second vertical sub-equivalence class has height $\frac{\ell - mx}{n}$ for some integer $n$. None of these cylinders can pass through a saddle connection of length $kx$, where $k$ is an integer, since the only solution to 
\[ kx = a\left( \frac{\ell - mx}{n}\right) + b mx \]
with $a$ and $b$ integers occurs when $a = 0$ (because the $\Q$-span of $\ell$ and $x$ in $\R$ is two-dimensional). Therefore, all horizontal saddle connections of length $kx$ are only intersected by cylinders equivalent to $W$. Since all these cylinders have identical heights - $Nx$ - and since $Nx$ is the length of the longest horizontal saddle connection that has length of the form $nx$ for some integer $n$ - it follows that all horizontal saddle connections that do not have length $\ell$ have length $Nx$. If necessary, set $x$ to $Nx$ so that all horizontal saddle connections have length $\ell$ or $x$.

We see too from the argument that whenever a saddle connection of length $x$ is put into standard position, all other saddle connections of length $x$ occur as cross curves of vertical cylinders. 

Suppose now that cylinders sub-equivalent to $W$ do not pass through saddle connections of length $\ell$. If such a cylinder passes through a saddle connection, then it contains that saddle connection as a cross curve. The other sub-equivalence class of vertical cylinders therefore has cylinders of height $\frac{\ell}{m}$ for some integer $m$. Since no saddle connection of the length $nx$ for some integer $n$ can be contained in finitely many vertical cylinders of height $\frac{\ell}{m}$ we see that every saddle connection of length $nx$ for some integer $n$ is contained in a cylinder sub-equivalent to $W$ and hence has length $Nx$ and occurs in a vertical cylinder as a cross curve. We are now done as before. 

\noindent \textbf{Step 4: All saddle connections in $\cA_i$, for $i = 1, 2$, have identical lengths}

Suppose now, without loss of generality, that $t$ borders a cylinder in $\cA_1$ and $\cA_3$. Therefore, the cylinder $V$, which is formed when $t$ is put in standard position, has height $\ell$ and length $h_1 + h_3$. All sub-equivalent cylinders have a length that is a rational multiple of this length and which must be of the form $a\left( h_1 + h_3 \right) + b\left( h_2 + h_3 \right)$ where $a$ and $b$ are integers. Since the ratio of lengths of core curves in the vertical direction must span a field that properly contains $\Q$  we see that $a\left( h_1 + h_3 \right) + b\left( h_2 + h_3 \right)$ is a rational multiple of $h_1 + h_3$ if and only if $b = 0$. This shows that all cylinders sub-equivalent to $V$ only pass through sub-equivalence classes $\cA_1$ and $\cA_3$. Therefore, all horizontal saddle connections of length $\ell$ border $\cA_1$ and $\cA_3$. The same argument shows that when $u$ is put in standard position, all cylinders sub-equivalent to $W$ pass exclusively through $\cA_2$ and $\cA_3$ and hence that all saddle connections of length $x$ border $\cA_2$ and $\cA_3$.
\end{proof}

The following immediately implies Theorem~\ref{T1}.

\begin{thm}
$\M$ is a branched covering construction of an eigenform locus in $\mathcal{H}(1,1)$.
\end{thm}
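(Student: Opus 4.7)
The plan is to take the horizontal structure obtained in Proposition~\ref{P:main}, invoke a vertical analogue, and then build an explicit branched cover onto a three-cylinder surface in $\mathcal{H}(1,1)$ that one can identify as an eigenform.

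First, I would record the output of Proposition~\ref{P:main}: the $\cA_1$-saddle connections all have common length $\ell$, the $\cA_2$-saddle connections all have common length $x$, the $\cA_i$-cylinders have height $h_i$, and once a saddle connection is put in standard position all saddle connections of that class occur as cross curves of vertical cylinders. Using Wright's completely periodic theorem (which applies because $\M$ is rank one), the surface in standard position is vertically periodic, so I would re-run the analysis of Sections~\ref{S:Background}--\ref{S:Sub-equivalence Classes in $\mathcal{H}^{hyp}(g-1,g-1)$} in the vertical direction. This yields vertical sub-equivalence classes $\cB_1, \cB_2, \cB_3$ with heights $\ell, x, \ell+x$ (because $\cA_1, \cA_3$-bordering and $\cA_2, \cA_3$-bordering cross curves have those heights), with $\cB_3$ borrowing both types and $\cB_1, \cB_2$ only one each, mirroring the horizontal combinatorics.

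Second, I would construct $(Y, \zeta) \in \mathcal{H}(1,1)$ with three horizontal cylinders $C_1^Y, C_2^Y, C_3^Y$ of heights $h_1, h_2, h_3$ and circumferences $\ell, x, \ell+x$, glued so that $C_3^Y$ is adjacent to both $C_1^Y$ and $C_2^Y$ along single saddle connections of the corresponding lengths. I would then define $\pi \colon (X,\omega) \to (Y,\zeta)$ on each horizontal cylinder by a translation that wraps a cylinder in $\cA_i$ onto $C_i^Y$ a number of times determined by its circumference. Proposition~\ref{P:main} makes the horizontal identifications match, because every saddle connection in $\cA_i$ has length $\ell_i$; the matching vertical decomposition guarantees that these local maps patch across the shared boundaries of adjacent cylinders. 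A Riemann--Hurwitz count, combined with the fact that $\pi$ is unramified away from zeros, forces $(Y,\zeta)$ to lie in $\mathcal{H}(1,1)$.

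Third, to upgrade $(Y, \zeta)$ to a point of an eigenform locus, I would push $\M$ forward to an orbit closure $\M'$ in $\mathcal{H}(1,1)$. Since $\pi$ is a finite branched cover of translation surfaces, the pushforward $\pi_*$ sends $T_{(X,\omega)}\M$ onto $T_{(Y,\zeta)}\M'$ and preserves the rank--rel decomposition and nonarithmeticity (because $\ell/x \notin \mathbb{Q}$). By McMullen's classification (\cite{Mc}, \cite{McM:spin}, \cite{Mc4}, \cite{Mc5}), the only rank one rel one nonarithmetic orbit closures in $\mathcal{H}(1,1)$ are eigenform loci for real quadratic orders, so $\M'$ must be one.

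The main obstacle will be verifying that the map $\pi$ is globally well defined: the local translations are prescribed only up to the sub-equivalence data, and one must use the compatibility of the horizontal and vertical sub-equivalence structures to ensure that the identifications agree on shared boundaries, that no spurious ramification is introduced away from the zeros, and that the number of zeros on $(Y,\zeta)$ is exactly two. This compatibility is the substance of the construction, and it is precisely what the combination of both directions of Proposition~\ref{P:main} with the hyperelliptic geometry of~\cite{Lindsey} and~\cite{Apisa-hyp} is designed to supply.
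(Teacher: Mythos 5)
Your outline follows the paper's strategy at a high level (local isometries onto three model cylinders, then McMullen's classification), but it leaves unresolved exactly the point you yourself flag as ``the main obstacle,'' and that point is the substance of the paper's proof. Before any covering map can be assembled, one must know that on the boundary of every cylinder in $\cA_3$ the saddle connections from $\cA_1$ and those from $\cA_2$ appear in alternating order; otherwise the local isometries onto the model cylinders cannot be glued into a single map onto the three-cylinder surface of Figure~\ref{F:covered-surface}. The paper proves this by contradiction: if $s_1, t_1, t_2$ appear consecutively on the top of a cylinder in $\cA_3$ with $s_1 \in \cA_1$ and $t_1, t_2 \in \cA_2$, then putting $t_1$ in standard position forces, by Proposition~\ref{P:main}, every cylinder sub-equivalent to the resulting vertical cylinder to pass only through $\cA_2 \cup \cA_3$, while a vertical segment leaving $t_2'$ must cross $s_1$ and enter a cylinder of $\cA_1$. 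Your appeal to ``the compatibility of the horizontal and vertical sub-equivalence structures'' does not supply this argument, and without it the map $\pi$ need not exist.

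Two further points. First, your ``vertical analogue'' producing three vertical sub-equivalence classes $\cB_1, \cB_2, \cB_3$ of heights $\ell, x, \ell+x$ contradicts Step 1 of Proposition~\ref{P:main}: in standard position there are exactly \emph{two} vertical sub-equivalence classes of heights $\ell$ and $x$ (Proposition~\ref{P:3sc} would require the rel deformation to lie in the vertical twist space, which is precisely what fails there). Second, producing a single covering $(X,\omega) \to (Y,\zeta)$ does not by itself show that $\M$ is a locus of branched covers; the paper takes $(X,\omega)$ with dense orbit and invokes Corollary 4.3 of Apisa's earlier paper, whereas your claim that $\pi_*$ carries $T_{(X,\omega)}\M$ onto the tangent space of an orbit closure downstairs is asserted rather than justified. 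The final appeal to McMullen's classification of complex three-dimensional nonarithmetic orbit closures in $\mathcal{H}(1,1)$ is correct once these gaps are filled.
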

\begin{proof}
Suppose to a contradiction that there is a cylinder $C$ in $\cA_3$ on whose boundary the saddle connections in $\cA_1$ and $\cA_2$ do not appear in alternating order. Suppose without loss of generality that the saddle connections $s_1, t_1, t_2$ appear on the top boundary, where $s_1$ belongs to $\cA_1$ and $t_i$ to $\cA_2$ for $i = 1,2$. On the bottom of the cylinder the saddle connections appear in the order $t_2', t_1', s_1'$ where a prime denotes the image of a saddle connection under the hyperelliptic involution. Put $t_1$ into standard position and let $V$ be the resulting vertical cylinder. By Proposition~\ref{P:main}, every cylinder sub-equivalent to $V$ must contain all horizontal saddle connections in $\cA_2$ and must only pass through cylinders in $\cA_2 \cup \cA_3$. However, since a vertical line from some point in $t_2'$ will intersect $s_1$ and hence pass into a cylinder in $\cA_1$, this is not the case and we have a contradiction.

Suppose without loss of generality that the longest saddle connection (length $\ell$) belongs to $\cA_1$ and put it in standard position. By Proposition~\ref{P:main}, all cylinders in $\cA_i$ are horizontally tiled by $C_i$ for $i =1,3$, where $C_i$ for $i = 1, 2, 3$ will refer to the cylinders in Figure~\ref{F:iso} with labels corresponding to lengths, all angles right angles, and $x$ the length of the saddle connections in $\cA_2$. These tilings induce local isometries from cylinders in $\cA_i$ to $C_i$, for $i = 1, 3$, that are undefined at zeros and that take horizontal saddle connections to horizontal saddle connections.

\begin{figure}[H]
    \begin{subfigure}[b]{0.3\textwidth}
        \centering
        \resizebox{\linewidth}{!}{\begin{tikzpicture}
        		\draw (0,0) -- (2,0) -- (2,1) -- (0,1) -- (0,0);
		\node at (-.2, .5) {$h_1$};
		\node at (1, 1.2) {$\ell$};
		\draw[black, fill] (0,0) circle[radius=1pt]; \draw[black, fill] (0,1) circle[radius=1pt];
		\draw[black, fill] (2,0) circle[radius=1pt]; \draw[black, fill] (2,1) circle[radius=1pt];
	\end{tikzpicture}
        }
        \caption{$C_i$: The cylinder by which all cylinders in $\cA_i$ are tiled for $i = 1,2$}
    \end{subfigure}
    \qquad 
        \begin{subfigure}[b]{0.3\textwidth}
        \centering
        \resizebox{\linewidth}{!}{\begin{tikzpicture}
        		\draw (0,0) -- (2,0) -- (2,1) -- (0,1) -- (0,0);
		\draw[dotted] (1,0) -- (1,1);
		\draw[black, fill] (1,0) circle[radius=1pt]; \draw[black, fill] (1,1) circle[radius=1pt];
		\draw[black, fill] (0,0) circle[radius=1pt]; \draw[black, fill] (0,1) circle[radius=1pt];
		\draw[black, fill] (2,0) circle[radius=1pt]; \draw[black, fill] (2,1) circle[radius=1pt];
		\node at (-.2, .5) {$h_3$};
		\node at (.5, 1.2) {$\ell$};
		\node at (1.5, 1.2) {$x$};
	\end{tikzpicture}
        }
        \caption{$C_3$: The cylinder by which all cylinders in $\cA_3$ are tiled}
    \end{subfigure}
\caption{The cylinders that tile those in $\cA_i$ for $i = 1, 2, 3$} 
\label{F:iso}
\end{figure}
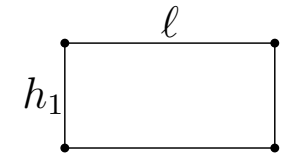
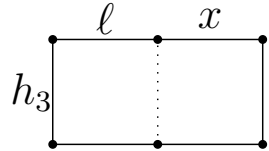

Apply an element of the cylinder preserving space to put a saddle connection of length $x$ into standard position. As in Lemma~\ref{L:stp} we may assume that this deformation acts as the identity on $\cA_3$ and by the matrix $g = \begin{pmatrix} 1 & \alpha \\ 0 & 1 \end{pmatrix}$ on $\cA_1$ for some $\alpha \in \R$. By Proposition~\ref{P:main}, all cylinders in $\cA_2$ are tiled by $C_2$ and admit a local isometry to $C_2$ defined away from zeros (see Figure~\ref{F:iso}). 

Combining these local isometries we have a map $f: (X, \omega) \ra (Y, \zeta)$ where $(Y, \zeta)$ is the translation surface that consists of $g \cdot C_1, C_2, C_3$ glued together as follows (numbered edges indicate gluing).

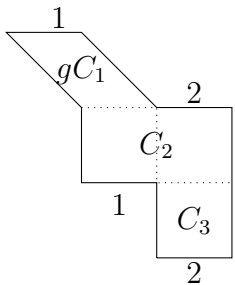
\begin{figure}[H]
\begin{tikzpicture}
	\draw (2,0) -- (2,1) -- (1,1) -- (1,2) -- (0, 3) -- (1, 3) -- (2, 2) -- (3,2) -- (3,0) -- (2,0);
	\draw[dotted] (1,2) -- (2,2) -- (2,1) -- (3,1);
	\node at (1, 2.5) {$g C_1$}; \node at (2, 1.5) {$C_2$}; \node at (2.5, .5) {$C_3$};
	\node at (.7, 3.2) {$1$}; \node at (1.5, .7) {$1$}; \node at (2.5, 2.2) {$2$}; \node at (2.5, -.2) {$2$};
\end{tikzpicture}
\caption{The translation surface $(Y, \zeta)$} 
\label{F:covered-surface}
\end{figure}

Since $(X, \omega)$ may be taken to have dense orbit in $\M$, by \cite[Corollary 4.3]{Apisa-hyp}, it follows that $\M$ is a locus of branched covers of translation surfaces in $\mathcal{H}(1,1)$. Since $\M$ is nonarithmetic and has complex dimension $3$, it must be a locus of branched covers of nonarithmetic eigenforms in $\mathcal{H}(1,1)$, since these are the only such orbit closures of complex dimension $3$ in $\mathcal{H}(1,1)$ by McMullen~\cite{Mc5}.
\end{proof}


\bibliography{mybib}{}
\bibliographystyle{amsalpha}
\end{document}